\documentclass[twoside,11pt]{article}
\usepackage{amssymb,amsmath,amsthm,amsfonts, epsfig}           %For double arrows
\usepackage{mathtools} % provides enhancements / bug-fixes for amsmath
\usepackage{hyperref}
\usepackage{eucal}
\usepackage{a4wide}
\usepackage[svgnames]{xcolor}
\usepackage{mathptmx} %times font
\usepackage{graphicx}
\usepackage{tikz-cd}
 \usepackage{fancyhdr}
 \usepackage{MnSymbol}
 \usepackage{subcaption}
% \usepackage[noadjust]{cite} % `noadjust` so that no space is added

%%%%%%%%%%%%%%%%%%%%%%%%%%%%%%%%%%%%%%%%%%
\title{Invariant measures as obstructions to  attractors in dynamical systems and  their role in nonholonomic mechanics  }

\author{  L.~C.~Garc\'ia-Naranjo\footnote{Dipartimento di Matematica ``Tullio Levi-Civita", Universit\`a di Padova, Via Trieste 63, 35121 Padova, 
Italy. luis.garcianaranjo@math.unipd.it. ORCID: 0000-0002-3589-6068.}\, , \, R. Ortega\footnote{Departmento de Matem\'atica Aplicada, Facultad de Ciencias, Universidad de Granada, 18071 Granada, Spain. R.O.: rortega@ugr.es. ORCID: 0000-0002-7968-1739. A.J.U: ajurena@ugr.es, ORCID: 0000-0002-5384-7251.} \; and A.~J.~Ure\~na$^{\dagger}$ }

\date{\today}

%%%%%%%%%%%%%%%%%%%%%%%%%%%%%%%%%%%%%%%%%%
\numberwithin{equation}{section}
\numberwithin{table}{section}
\numberwithin{figure}{section}

\hypersetup{colorlinks=true,linkcolor=violet,citecolor=purple}

\newtheorem{theorem}{Theorem}[section]
\newtheorem{lemma}[theorem]{Lemma}
\newtheorem{proposition}[theorem]{Proposition}

{\theoremstyle{definition}

\newtheorem{remark}[theorem]{Remark}
\newtheorem*{remarks*}{Remarks}

}

\providecommand{\customgenericname}{}
\newcommand{\newcustomtheorem}[2]{%
  \newenvironment{#1}[1]
  {%
   \renewcommand\customgenericname{#2}%
   \renewcommand\theinnercustomgeneric{##1}%
   \innercustomgeneric
  }
  {\endinnercustomgeneric}
}

\newcustomtheorem{customhyp}{}

\pagestyle{fancy} 
\def\headcolour{\color{Grey}}
\setlength{\headheight}{14pt}
 \lhead[\headcolour\textsc{ }]{\headcolour\textsc{ }}
 \rhead{\headcolour\thepage}
 \cfoot{}
 \rfoot{\it\color{Grey} \today}

%%%%%%%%%%%%%%%%%%%%%%%%%%%%%%%%%%%%%%%%%%%

%%%%%%%%%%%%%%%%%%%%%%%%%%%%%%%%%%%%%%%%%%%
\def\restr#1{\,\vrule height1.2ex width.4pt
  depth0.8ex\lower0.4ex\hbox{\scriptsize $\,#1$}}

\newcommand{\R}{\mathbb{R}}

\newcommand{\I}{\mathbb{I}}

\newcommand{\so}{\mathfrak{so}}

\newcommand{\A}{\mathcal{A}}

%%%%%%%%%%%%%%%%%%%%%%%%%%%%%%%%%%%%%%%%%%%%%%%%%%%%%%%%%%%%%%%%%%
\begin{document}

\maketitle

\abstract{We present some rigorous results on the absence of a wide class of invariant measures for dynamical systems possessing attractors. 
We then consider a generalization of the classical   nonholonomic Suslov problem  which shows how previous  investigations of existence of 
invariant measures for nonholonomic
systems should necessarily  be extended beyond the class of measures with strictly positive $C^1$ densities 
if one wishes  to determine dynamical obstructions to the presence of attractors. }

\vspace{0.5cm}
\noindent
{\em Keywords:} invariant measures, attractors, nonholonomic systems, Suslov problem. \\
{\em 2020 MSC}:  37A05, 37C70, 37J60, 70F25.

\section{Introduction}
\label{S:intro}

 It is a well-established idea that the existence of an invariant measure is an  important feature of a dynamical
system. However, the  type of information  that can be extracted from such
an invariant measure depends on its properties. For instance, a measure concentrated at a point is invariant for a system if and only if the point is an equilibrium. It is clear that no additional information about the system can be extracted from the invariance of such measure. This paper focuses on the invariance of a specific class of measures having an almost everywhere positive density that results in the obstruction to the
existence of attractors in the phase space.

Our motivation comes from nonholonomic mechanics. It is well known that this type of mechanical systems
does not possess a symplectic Hamiltonian structure and, therefore, the existence of a smooth invariant
volume form on the phase space is not guaranteed. Even though nonholonomic systems are 
energy preserving,
the absence of an invariant measure  may lead to surprising phenomena like the reversal of the rattleback (see e.g. \cite{BKM06,Betal2012,AKoch22}
and the references therein) which cannot occur in symplectic Hamiltonian systems.
 
Since  Blackall's early work \cite{Blackall}, a great number of references   have  analyzed the conditions
of existence of smooth invariant measures for nonholonomic systems
e.g.  \cite{Kozlov87, Kozlov88,Stanchenko89,Jov98,CaCoLeMa02,ZeBloch03,BoMaBi13, FedGNMa2015,ClarkBloch23}.
% Such conditions are often derived from the analysis of the structure of the equations of motion of a specific subclass of 
%systems. However,
A common point in these analyses is that the density  of the invariant measure
(with respect to a certain volume form) is assumed to be 
everywhere  positive on the phase space. This assumption is  used to  work with the logarithm 
of the density function. 

The point of this paper is to indicate that the  condition that the density of the measure
is everywhere positive is too strong if one is interested in determining obstructions to the 
existence of attractors in the phase space. For this reason, we relax this condition
and  allow the densities of  measures in the class $\A$ that we consider  to vanish on a set of measure zero.
To illustrate the potential relevance of these measures in nonholonomic systems,  we provide a concrete  example,
consisting of a generalization of the classical Suslov problem,  which for certain parameter values 
possesses  an invariant  measure of type $\A$ whose density 
necessarily vanishes. In particular, the existence of this invariant measure cannot be detected with the techniques of the references indicated
above.

The relevance of  integral invariants which are not necessarily positive everywhere has been indicated
before by Kozlov \cite[Section 5]{Kozlov16}. The present paper complements
his discussion in the context of obstructions to the existence of attractors. Moreover,   we clarify their
role in nonholonomic mechanics with our example.

The  paper is organized as  follows. We first introduce  invariant measures of class  $\A$ in Section \ref{S:measures}
and prove that they are obstructions for the existence of attractors in Proposition \ref{P:attractor-measure}. In Section 
\ref{S:prob} we study  our example. After deriving
the reduced equations of motion and their main properties, we discuss existence
of invariant measures in Section \ref{ss:measureExample}.  We finally give some concluding 
remarks in Section \ref{S:conclusions}.

\section{  Invariant measures and attractors in dynamical systems} 
\label{S:measures}

In the Euclidean space $\R^N$ we consider measures $\mu$ of the type
\begin{equation}
\label{eq:in} \mu = M(x_1,\dots, x_N) \, dx_1\cdots dx_N,
\end{equation}
where the density function $M:\R^N\to \R$ is  measurable, locally integrable and strictly positive almost everywhere. This class of measures will be
denoted by $\A$.

 Assume now that $X:\R^N\to \R^N$ is a $C^1$ vector field such that the system of equations
\begin{equation}
\label{eq:de} \dot x = X(x),
\end{equation}
defines a complete flow $\{\phi_t\}_{t\in \R}$ in $\R^N$. We say that the measure $\mu\in \A$ is invariant under the flow
if 
\begin{equation*}
\mu(\phi_t(A))=\mu(A),
\end{equation*}
for each $t\in \R$ and each Lebesgue measurable set $A\subset \R^N$. This definition makes sense since
diffeomorphisms preserve the  $\sigma$-algebra of
Lebesgue measurable sets.

The book \cite{NemStepanov} contains an excellent introduction to the theory of invariant measures. Following the original terminology of Poincar\'e, and allowing a certain ambiguity, these measures are sometimes
called  {\em integral invariants}.
  In particular, it is proved in 
 \cite[p.429]{NemStepanov} that the partial differential equation
 \begin{equation}
\label{eq:pde} \sum_{i=1}^N \frac{\partial }{\partial  x_i} \left ( MX_i \right ) = 0,
\end{equation}
can be employed to characterize invariant measures with $M\in C^1(\R^N)$.

In contrast with some previous literature mentioned in the introduction, we are allowing  sets of measure zero where the density can vanish. This is useful to enlarge
the class of systems \eqref{eq:de} with invariant measure. As a simple example, consider the linear system in $\R^2$,
\begin{equation}
\label{eq:2Dexample}
\dot x_1= -x_1, \qquad \dot x_2=2x_2.
\end{equation}
The measure defined by \eqref{eq:in} with $M(x_1,x_2)=|x_1|^5x_2^2$ is invariant. In fact $M$ is in $C^1(\R^2)$ and it is easy to check that
it satisfies \eqref{eq:pde}. Since $M$ only vanishes on the coordinate axes we note that the measure belongs to the class $\A$. On the other hand,
this linear system cannot admit an invariant measure $\mu$ of the form given in \eqref{eq:in} with $M\in C^1(\R^2)$ and $M(x_1,x_2)>0$ for every $(x_1,x_2)\in \R^2$. The equation \eqref{eq:pde}
can be expressed as 
\begin{equation*}
-x_1\frac{\partial  M}{\partial  x_1}+2 x_2\frac{\partial M}{\partial  x_2}+M=0,
\end{equation*}
and it is clear that any solution must vanish at the origin.

Assume now that $K\subset \R^N$ is a non-empty compact set that is invariant under the flow; that is,
\begin{equation*}
\phi_t(K)=K \qquad \mbox{for each $t\in \R$}.
\end{equation*}
We say that  $K$ is an {\em attractor} if there exists an open set $W\subset \R^N$ with $K\subset W$
and such that 
\begin{equation*}
\mbox{dist}(\phi_t(x),K)\to 0 \quad \mbox{as} \quad t\to +\infty
\end{equation*}
for each $x\in W$.

It is intuitively clear that class $\A$ invariant measures cannot exist in the presence of an attractor. The following proposition
 formalizes  this idea. In its proof, we  use the following  two properties of class $\A$ measures which 
 follow directly from their definition: 1) the measure of any non-empty open set is positive; and 2)  the measure of any bounded set is finite.

\begin{proposition} 
\label{P:attractor-measure}
Assume that the flow associated to the system \eqref{eq:de} has an attractor. Then there are no invariant measures in the class $\A$.
\end{proposition}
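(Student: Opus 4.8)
The plan is to argue by contradiction: suppose $K$ is an attractor with attracting neighborhood $W$, and suppose $\mu\in\A$ is invariant. The idea is to exploit the fact that the "basin-like" region of $W$ shrinks onto $K$ under the forward flow, so its measure cannot be preserved unless it is infinite — but openness forces it to be positive and boundedness (locally) forces it to be finite, a contradiction. The main technical point is to produce a set with positive finite measure whose forward images have measures tending to $0$, which contradicts invariance.

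Concretely, first I would fix $\varepsilon>0$ small enough that the closed $\varepsilon$-neighborhood $K_\varepsilon := \{x : \mathrm{dist}(x,K)\le \varepsilon\}$ is contained in $W$ and is compact (possible since $K$ is compact). I would then consider the open set $U := \{x\in W : \mathrm{dist}(x,K) < \varepsilon\}$, a bounded open neighborhood of $K$; by the two properties of class $\A$ measures recalled before the proposition, $0 < \mu(U) < \infty$. Next, using the attraction hypothesis together with compactness of $K_\varepsilon$, I would like to show that the forward orbit of $K_\varepsilon$ (hence of $U$) is eventually trapped and in fact squeezed: for any $\delta>0$ there is $T>0$ with $\phi_t(K_\varepsilon)\subset K_\delta$ for all $t\ge T$. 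This uniform absorption is the one place that needs care — pointwise convergence $\mathrm{dist}(\phi_t(x),K)\to 0$ does not immediately give uniformity on $K_\varepsilon$ — so I would either invoke a standard compactness/equicontinuity argument on the compact set $K_\varepsilon$ over a finite time window, or (more robustly) pass to the smaller invariant set $\bigcap_{t\ge 0}\phi_t(\overline{U})$ and note it is a compact invariant set contained in every $K_\delta$, hence equal to... well, contained in $K$; the uniform absorption onto arbitrarily thin neighborhoods then follows.

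Granting uniform absorption, the contradiction comes as follows. Pick a bounded open set $V$ with $\overline{V}\subset W$ but $\overline{V}\cap K = \emptyset$ — for instance an open ball sitting inside the annular region $\{x\in W: \varepsilon/2 < \mathrm{dist}(x,K) < \varepsilon\}$, which is nonempty provided $\varepsilon$ is taken with a little room. Then $\mu(V) > 0$ and $\mu(V) < \infty$. Let $\delta := \tfrac12\mathrm{dist}(\overline{V},K) > 0$. By uniform absorption there is $T$ with $\phi_T(U)\subset K_\delta$. But $\phi_{-T}(\overline{V})$ might not lie in $U$, so instead I would run the argument forward on $V$ itself: the sets $\phi_n(V)$ for $n\in\mathbb{N}$ all have measure $\mu(V)$ by invariance, yet they eventually all lie in $K_\delta$ which is disjoint from $V$ and, more importantly, they ``pile up'' — since $\phi_n(V)\subset K_{\delta_n}$ with $\delta_n\to 0$, and $\bigcap_n K_{\delta_n}=K$ has $\mu(K)$ possibly positive but finite. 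The cleanest finish: the sets $A_n := \phi_{-n}(U)$ satisfy $\mu(A_n)=\mu(U)$ for all $n$, they are nested increasing (because $\phi_1(U)\subset U$ once we arrange $U$ to be forward-invariant, which we can by replacing $U$ with $\bigcup_{t\ge 0}\phi_{-t}(\cdot)$... ) — so I would instead run it the simplest way, sketched next.

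The cleanest route, which I would actually write: choose $\varepsilon$ so that $K_{2\varepsilon}\subset W$ is compact, and shrink $W$ to $U:=\{ \mathrm{dist}(\cdot,K)<\varepsilon\}$. By uniform attraction there is $T>0$ with $\phi_T(K_{2\varepsilon})\subset U$, hence $\phi_{nT}(K_{2\varepsilon})$ is a decreasing sequence of compact sets (using $\phi_T(U)\subset\phi_T(K_{2\varepsilon})\subset U$) with intersection $K$. Now take the bounded open set $\Omega := U \setminus K_{\varepsilon/2}$; it is nonempty (the annulus $\varepsilon/2<\mathrm{dist}(\cdot,K)<\varepsilon$), so $0<\mu(\Omega)<\infty$, and crucially $\Omega\cap K=\emptyset$. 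For each $n$, $\phi_{-nT}(\Omega)\subset \phi_{-nT}(U)\subset K_{2\varepsilon}$ (since $U\subset\phi_{-T}(K_{2\varepsilon})$... iterate), so $\bigcup_{n\ge0}\phi_{-nT}(\Omega)\subset K_{2\varepsilon}$ has finite measure; but the sets $\phi_{-nT}(\Omega)$ are pairwise disjoint (their forward images $\phi_{nT}(\cdot)=\Omega$ would otherwise... no — I must ensure disjointness), each of measure $\mu(\Omega)>0$, giving infinite total measure — contradiction. Disjointness of $\{\phi_{-nT}(\Omega)\}$ is the remaining subtlety: it holds because $\Omega$ is disjoint from $K$ while $\phi_{mT}(\Omega)\subset K$ in the limit; more precisely, if $\phi_{-nT}(\Omega)\cap\phi_{-mT}(\Omega)\ne\emptyset$ with $n<m$ then $\Omega\cap\phi_{-(m-n)T}(\Omega)\ne\emptyset$, i.e. $\phi_{(m-n)T}(\Omega)\cap\Omega\ne\emptyset$, which is impossible for $(m-n)T$ large since $\phi_{kT}(\Omega)\subset\phi_{kT}(U)$ becomes arbitrarily close to $K$ while $\Omega$ stays at distance $\ge\varepsilon/2$. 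Handling the finitely many small $k$ may require a mild adjustment (pass to a subsequence $n_j$ along which the images are genuinely nested-into-thinner-shells), but this is routine. The single real obstacle is thus establishing \emph{uniform} attraction on the compact set $K_{2\varepsilon}$; everything else is bookkeeping with the two elementary properties of $\A$-measures.
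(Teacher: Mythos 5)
Your overall strategy (extract from $W$ a bounded open set of positive finite measure that the flow eventually pushes into a thin shell around $K$, then contradict invariance) is sound in spirit, and you correctly identify where the difficulty lies. But the step you flag as ``the single real obstacle'' --- uniform attraction, i.e.\ that for every $\delta>0$ there is $T$ with $\phi_t(K_{2\varepsilon})\subset K_\delta$ for all $t\ge T$ --- is not merely delicate: it is \emph{false} under the paper's definition of attractor, which only asks for pointwise convergence $\mathrm{dist}(\phi_t(x),K)\to 0$ on an open neighborhood. The classical Vinograd-type examples of a planar equilibrium that attracts every orbit yet is not Lyapunov stable give a compact invariant set $K=\{0\}$ satisfying the paper's hypotheses for which $\sup_{x\in K_\varepsilon}\mathrm{dist}(\phi_t(x),K)$ does not tend to $0$ for any $\varepsilon>0$: there are points $x_n\to 0$ and times $t_n\to\infty$ with $\mathrm{dist}(\phi_{t_n}(x_n),K)\ge\delta_0$. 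Neither of your proposed repairs closes this gap. Compactness/equicontinuity of $\{\phi_t\}$ only gives uniform control on finite time windows, not as $t\to+\infty$; and the family $\{\phi_t(\overline U)\}_{t\ge 0}$ is not nested (that would require $\overline U$ to be forward invariant, which you do not have), so you cannot run the finite-intersection argument that would let you conclude $\phi_T(\overline U)\subset \mathrm{int}(K_\delta)$ for some $T$. Since your disjointification of the images of the annulus $\Omega$ rests entirely on this uniform absorption, the contradiction does not materialize. (There are also secondary bookkeeping errors --- e.g.\ $\phi_T(K_{2\varepsilon})\subset U$ gives $\phi_{-nT}(U)\supset$ rather than $\subset$ increasingly large sets, so the backward images are not trapped in $K_{2\varepsilon}$; the forward images are the ones to use --- but these are fixable.)

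The paper sidesteps uniformity altogether with a measure-theoretic device: since $\mu(W)<\infty$, Egoroff's theorem applied to the pointwise convergence $\chi_{K_r}\circ\phi_{t_n}\to 1$ on $W$ produces a subset $W_*\subset W$ with $\mu(W\setminus W_*)<\delta$ on which the convergence is uniform, whence $\phi_{t_n}(W_*)\subset K_r$ for large $n$; invariance then gives $\mu(W)=\mu(\phi_{t_n}(W_*))+\mu(W\setminus W_*)<\mu(K_r)+\delta$, contradicting the choice $\delta<\mu(W)-\mu(K_r)$. In other words, ``uniform attraction off a set of arbitrarily small measure'' is both true and sufficient, whereas the topological uniform attraction you rely on is neither available nor needed. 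If you want to keep your disjoint-images scheme, you would have to restrict it to such an Egoroff set, at which point you have essentially reconstructed the paper's argument.
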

\begin{proof}
By contradiction assume that $\mu\in \A$ is invariant. Let $W$ be an open and bounded set with $K\subset W$ and attracted by $K$. 
We fix a small number $r>0$ such that $K_r\subset W$, where
\begin{equation}
\label{eq:auxsetKr}
K_r=\{ x\in \R^N \, :\, \mbox{dist}(x,K)\leq r \}.
\end{equation}
Since $W\setminus K_r$ is a non-empty open set, we know that it has positive measure. Let us take $\delta >0$ with 
\begin{equation}
\label{eq:Ant}
\delta < \mu(W\setminus K_r)= \mu(W)- \mu( K_r).
\end{equation}
Now consider the sequence of measurable functions
\begin{equation*}
\chi_{K_r} \circ\phi_{t_n} :W\to \R,
\end{equation*}
where $\chi_{K_r}$ is the characteristic function of $K_r$ and $\{t_n\}$ is a sequence with $t_n\to +\infty$. Since $K$ attracts $W$, we obtain
\begin{equation*}
\chi_{K_r} \circ\phi_{t_n}(x)\to 1 \qquad \mbox{as} \quad n\to +\infty,
\end{equation*}
for each $x\in W$. In principle this convergence must be understood in a pointwise sense but we can apply Egoroff's Theorem (see e.g. \cite{EvansGariepy})
to obtain uniform convergence in a suitable set. More precisely, we can find a measurable set $W_*\subset W$ such that 
$\mu(W\setminus W_*)<\delta$ and the above convergence is uniform in $x\in W_*$. Then, for $n$ large enough, we know that
$\phi_{t_n}(W_*)$ is contained in $K_r$. In particular, $\mu(\phi_{t_n}(W_*))\leq \mu(K_r)$. 
%Finally, the invariance of the measure
 Using the invariance of the measure we find,
\begin{equation*}
\begin{split}
\mu(W)&=\mu(W_*)+\mu(W\setminus W_*)\\ &= \mu(\phi_{t_n}(W_*))+ \mu(W\setminus W_*) \\
&< \mu(K_r)+\delta,
\end{split}
\end{equation*}
 which is incompatible with \eqref{eq:Ant} since $\mu(K_r)$ and $\mu(W)$ are finite.
\end{proof}

%
%{\color{red}
%\begin{remark}
%The hypothesis that $\mu\in \A$ is used when we compare  \eqref{eq:Ant} with the final inequality of the proof.  Indeed,  this comparison requires
% $ \mu(K_r)<\infty$  which is true since $K_r$ is compact. We also use  $\mu\in \A$
% to conclude that the measure of the open set $W\setminus K_r$ is positive. 
%\end{remark}
%}

\begin{remark}
\label{rmk:measurezeroW}
The hypothesis that the region of attraction $W$ contains the compact invariant set  $K$ may be weakened if $K$ has
zero measure. Specifically, the conclusion of the proposition remains valid  under the following condition:
{\em There exists a non-empty open set $W$ with $\phi_t(x)\to K$ if $t\to +\infty$ for each $x\in W$.} 
Indeed, in this case one can obviously find $\delta, r>0$ such that \eqref{eq:Ant} holds with $K_r$ defined by  \eqref{eq:auxsetKr}, and the rest of
the proof applies unchanged.
This can be used to conclude non-existence of class $\A$ invariant 
measures in the presence of partial attractors. As an example consider $N=1$ and $\dot x_1=\sin ^2(x_1)$, $K=\{0\}$.
\end{remark}

For the application we have in mind we need to replace $\R^N$ by a finite dimensional manifold. This can be done without too much effort. 
Assume that $(\mathcal{M},g)$ is a Riemannian manifold of dimension $n$ and let $\mu_g$ be the associated measure. In local
coordinates $(x_1,\dots, x_N)$ the measure $\mu_g$ is defined by the formula
\begin{equation*}
\sqrt{\det g(x_1,\dots, x_N)}\, dx_1\cdots dx_N.
\end{equation*}
Then we will consider the class $\A(\mathcal{M})$ of admissible measures
\begin{equation*}
\mu = M \mu_g,
\end{equation*}
where $M:\mathcal{M}\to \R$ is a locally integrable function (with respect to $\mu_g$) that is strictly positive almost everywhere in $\mathcal{M}$.
The reader interested in measure theory will have no problems to prove that the class $\A(\mathcal{M})$ is independent of the metric $g$, however,
this fact will not be used in this paper. 
%
%\begin{remark}
%The class $\A(\mathcal{M})$ admits a characterization which is similar to the one given in Remark \ref{Rmk:ClassA} in $\R^N$.
%In particular, the class $\A(\mathcal{M})$ is 
%\end{remark}

Given a vector field $X$ on $\mathcal{M}$, we can consider the system \eqref{eq:de} on the manifold and it will be assumed that the 
associated flow $\{\phi_t\}_{t\in \R}$ is complete on $\mathcal{M}$. The notion of attractor is defined as before
and the proof of  Proposition \ref{P:attractor-measure} is easily adapted\footnote{the adaptation requires the replacement of 
bounded open sets in $\R^N$ by relatively compact open sets in the manifold $\mathcal{M}$.} to this setting.

\section{Nonholonomic example: a generalization of the Suslov problem}
\label{S:prob}

In this section we introduce our example which is a generalization of the classical nonholonomic Suslov problem for the dynamics of a rigid body.
Our generalization  consists in the incorporation of an internal  rotor,
which adds a degree of freedom to the system, leading to  a more delicate dependence of the dynamics on the system parameters.
Our main point is to discuss existence of invariant measures which we do  in subsection \ref{ss:measureExample}.

\subsection{Description of the system}
Consider a rigid body, which we refer to as the {\em carrier}, with a rotor in its inside.  For simplicity,  we suppose that
the center of mass of the carrier and the   rotor coincide and  is fixed throughout the motion. Also for simplicity,
we assume  that the axis of symmetry of the rotor is aligned with  the smallest axis of inertia of the carrier that we label as the third one.
The angular velocity of the rotor  is then $\Omega_r= \Omega + \dot \theta E_3$ where $\Omega$ is the angular velocity of the carrier,  $E_3=(0,0,1)$, and $\theta$ is the rotation  angle of the rotor about its axis (see Figure \ref{F:scheme}). All of the vectors given above, and all those appearing hereafter,  are written with respect to a frame attached to the carrier which is
centred at its center of mass and is aligned with its principal axes of inertia. 

\begin{figure}[h!]
\centering
\includegraphics[width=4cm]{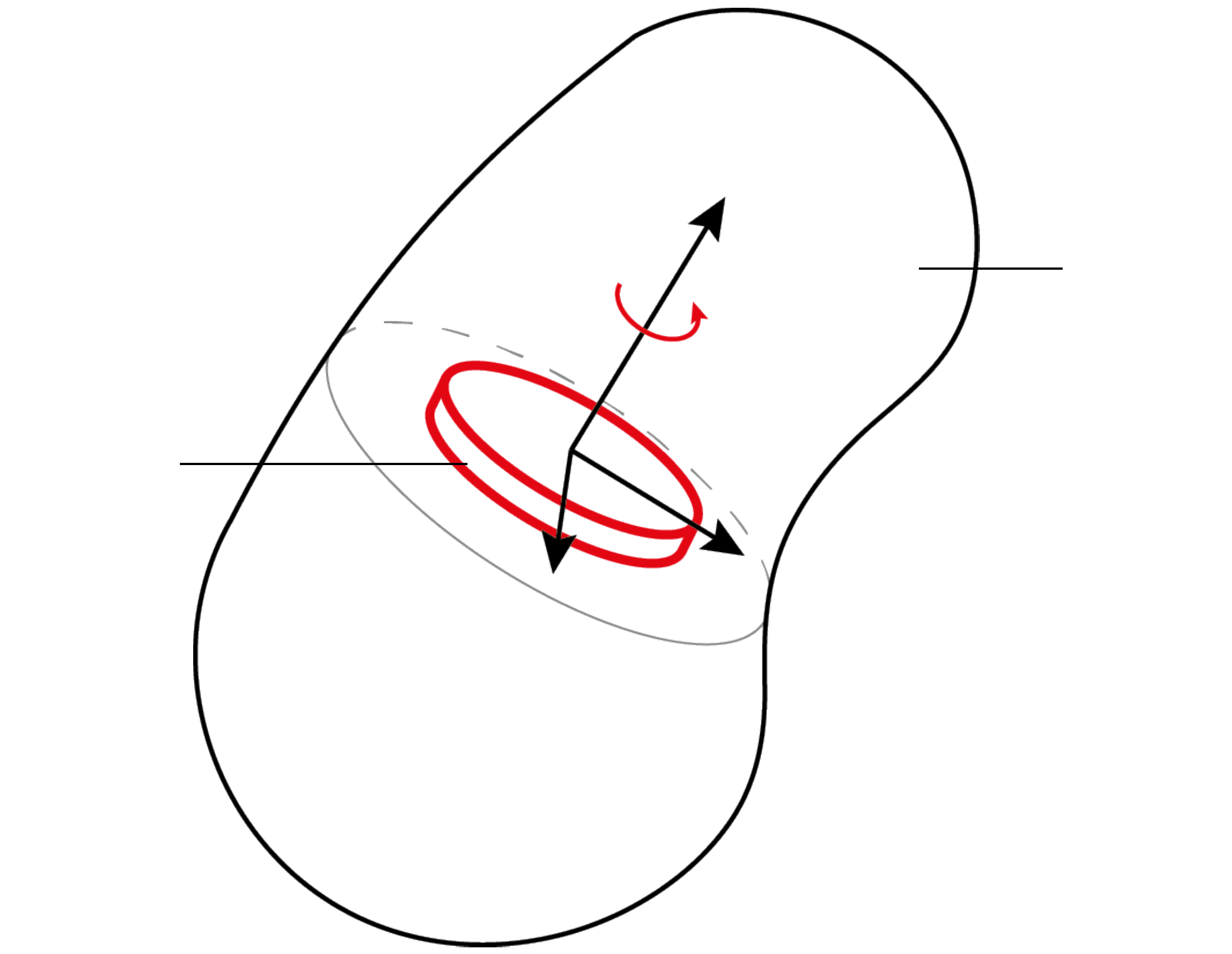} 
 \put (-14,61) {\small{carrier}}
    \put (-50,50) {\small{$\theta$}}
      \put (-44,65) {\small{$E_3$}}
          \put (-68,28) {\small{$E_1$}}
                 \put (-120,43) {rotor}
                  \put (-45,33) {\small{$E_2$}}
\caption{{\small Schematic representation of the carrier-rotor system.
}}
\label{F:scheme}
\end{figure}

The configuration space of the problem is the direct product Lie group $Q=SO(3)\times S^1$ where elements of $SO(3)$ specify the orientation
of the carrier relative to an inertial frame and the angle $\theta \in S^1$ gives the orientation of the rotor relative to the carrier as explained above. 
The kinetic energy  Lagrangian is
\begin{equation*}
L=\frac{1}{2} \langle \I_0 \Omega, \Omega \rangle +\frac{1}{2} \langle \I_r ( \Omega + \dot \theta E_3),  \Omega + \dot \theta E_3 \rangle ,
\end{equation*}
where $ \I_0=\mbox{diag}(I_1,I_2,I_3)$ is the inertia tensor of the carrier and $ \I_r=\mbox{diag}(K_1,K_1,K_3)$ is the inertia tensor of the rotor
(we assume that $K_3>0$ and $K_1\geq 0$). We assume that  the principal moments of inertia of the carrier satisfy
\begin{equation}
\label{eq:mom-inertia}
0<I_3<I_2<I_1.
\end{equation}

We consider the evolution of the above Lagrangian system subject to a Suslov-type nonholonomic constraint imposed to the
angular velocity $\Omega_r$ of the rotor. Specifically, we require that
\begin{equation}
\label{eq:const}
\langle a ,   \Omega_r  \rangle=\langle a ,   \Omega + \dot \theta E_3 \rangle=0,
\end{equation}
for a fixed non-zero vector $a\in \R^3$. Physically this means that the component of the (total) angular velocity of the rotor about an axis which is {\em fixed in the carrier} vanishes. This 
problem may be understood as a
generalization of  the classical Suslov problem \cite{Suslov}.\footnote{We recall that the Suslov problem considers the motion of a rigid body, with no rotor, subject to 
 the constraint $\langle a , \Omega \rangle =0$.} In the following, we will refer to $a$ as the {\em vector of forbidden rotations} and denote its components by 
 $a:=(a_1,a_2,a_3)$.

The phase space of the system, consisting of all configurations and admissible velocities is a 7-dimensional manifold  diffeomorphic 
to $SO(3)\times S^1\times \R^3$.

\subsection{Equations of motion}

Since both the Lagrangian and the constraints are written in solely in terms of $\Omega$ and $\dot \theta$, the constraint distribution and the 
Lagrangian are invariant under left multiplication on the direct product Lie group $SO(3)\times S^1$. Nonholonomic systems of this type, whose configuration space
is a Lie group and possess the invariance properties mentioned above, are 
called {\em LL-systems} (see e.g. \cite{Jov01, FedZen}). Following  \cite{FedKoz}, the reduced equations of motion are termed {\em Euler-Poincar\'e-Suslov equations}. In our
case these   are given by
\begin{equation}
\label{eq:EPS1}
\begin{split}
\frac{d}{dt} \left ( \frac{\partial L}{\partial \Omega} \right ) &= \frac{\partial L}{\partial \Omega} \times \Omega + \zeta a, \\
\frac{d}{dt} \left ( \frac{\partial L}{\partial \dot \theta} \right ) &=   \zeta a_3,
\end{split}
\end{equation}
for a multiplier  $\zeta$ that will be determined below.

 If $a_3=0$, i.e. if the vector of forbidden rotations is perpendicular to the axis of the rotor,
then the constraint does not involve $\dot \theta$, the second equation in \eqref{eq:EPS1} becomes a conservation law and  the system may be shown to be equivalent to the Suslov problem 
with gyroscope studied before in \cite{MacPrz}.  In fact, the resulting system further simplifies to the classical Suslov problem since the axis of the
gyroscope is perpendicular to the vector $a$ of forbidden rotations (see \cite{MacPrz}).

For the rest of the paper we  will focus on the the case $a_3\neq 0$, and assume, without loss of generality, that
\begin{equation*}
a_3=1.
\end{equation*}

We now proceed to determine the multiplier $\zeta$ in terms of $\dot \Omega$. On the one hand, differentiation of the constraint \eqref{eq:const} gives 
\begin{equation}
\label{eq:lambda-aux1}
 \ddot \theta +\langle  a , \dot \Omega \rangle =0.
\end{equation}
On the other hand, considering that $ \frac{\partial L}{\partial \dot \theta}=K_3( \dot \theta +\Omega_3)$, where $\Omega:=(\Omega_1,\Omega_2,\Omega_3)$, 
the second equation in \eqref{eq:EPS1} yields
\begin{equation}
\label{eq:lambda-aux2}
\zeta=K_3(\ddot \theta +\dot  \Omega_3). 
\end{equation}
Combining  \eqref{eq:lambda-aux1} and  \eqref{eq:lambda-aux2} leads to our desired expression for $\zeta$:
\begin{equation*}
\label{eq:lambda}
\zeta= K_3 (\dot \Omega_3 - \langle  a , \dot \Omega \rangle ).
\end{equation*}

We now use the above expression for $\zeta$ and the constraint  \eqref{eq:const} to rewrite the first equation in   \eqref{eq:EPS1} as an autonomous 
system in 3 unknowns, $\Omega_1, \Omega_2, \Omega_3$, describing the evolution of the reduced system. 
Using that  $\frac{\partial L}{\partial \Omega} =(\I_0+\I_r)\Omega + K_3 \dot \theta E_3$, we get
\begin{equation}
\label{eq:motion-aux}
\frac{d}{dt} \left ((\I_0+\I_r)\Omega - K_3\langle a, \Omega \rangle E_3  \right ) = \left ((\I_0+\I_r)\Omega - K_3\langle a, \Omega \rangle E_3  \right )\times \Omega
+ K_3 \left (   \dot \Omega_3 - \langle  a , \dot \Omega \rangle \right ) a.
\end{equation}
The above equation may be written in a more appealing form by introducing the matrices\footnote{For column vectors $b,c\in \R^3$ we denote by $b\otimes c$ the $3\times 3$ matrix 
given by $bc^T$ where  ${}^T$ denotes transposition. It is clear that $b\otimes c$ has rank 1 if both $b$ and $c$ are nonzero. }
\begin{equation*}
\label{eq:mat-def}
\begin{split}
&\tilde{ \I}:=\I_0+\I_r - K_3 E_3\otimes E_3 = \mbox{diag}(I_1+K_1, I_2+K_1, I_3),\\
%&\J_a:=K_3 \left (E_3-a \right )\otimes \left (E_3-a \right ),\\
& \mathbb{K}_a:= \tilde{ \I}+K_3 \left (E_3-a \right )\otimes \left (E_3-a \right ), \\
& \mathbb{B}_a:=\tilde{ \I}+K_3 E_3 \otimes  \left (E_3-a \right ).
\end{split}
\end{equation*}
A calculation shows that \eqref{eq:motion-aux} is equivalently written as the following  system for the evolution of $\Omega\in \R^3$:
\begin{equation}
\label{eq:motionB}
\frac{d}{dt} \left ( \mathbb{K}_a \Omega   \right ) =\left   ( \mathbb{B}_a \Omega  \right )\times \Omega.
%+ \left   ( \mathbb{L}_a \Omega  \right )\times \Omega.
\end{equation}
For future reference we note that the explicit form of the matrices $ \mathbb{K}_a$ and  $\mathbb{B}_a$ is: 
\begin{equation*}
 \mathbb{K}_a=\left ( \begin{matrix} \lambda_1 +a_1^2K_3 & a_1a_2K_3 & 0 \\ a_1a_2K_3 &  \lambda_2 +a_2^2K_3 & 0
  \\ 0 & 0 & \lambda_3 \end{matrix} \right ), \qquad  \mathbb{B}_a=\left ( \begin{matrix} \lambda_1 &0 & 0 \\ 0 &  \lambda_2  & 0
  \\ -a_1K_3 & -a_2 K_3& \lambda_3 \end{matrix} \right ),
\end{equation*}
with
\begin{equation*}
\lambda_1=I_1+K_1, \qquad \lambda_2=I_2+K_1, \qquad \lambda_3=I_3.
\end{equation*}
We also note that our assumption \eqref{eq:mom-inertia} implies 
\begin{equation}
\label{eq:mom-inertia-lambda}
0<\lambda_3<\lambda_2<\lambda_1.
\end{equation}

The full (unreduced) equations of motion of the system on the 7-dimensional phase space  consist of the reduced system \eqref{eq:motionB} 
complemented with the reconstruction equations for the attitude matrix $g\in SO(3)$ of the
carrier and the angle $\theta$. These  are given by the kinematical relations 
\begin{equation*}
\label{eq:reconst}
\dot g = g \hat \Omega, \qquad \dot \theta = -\langle a , \Omega\rangle. 
\end{equation*}
As is usual, $ \hat \Omega$ denotes the unique $3\times 3$ skew-symmetric matrix such that $ \hat \Omega v=  \Omega \times v$ for all $v\in \R^3$.

\subsection{Conservation of energy}
%We begin by rewriting \eqref{eq:motion} as
%\begin{equation}
%\label{eq:motionB}
%\frac{d}{dt} \left ( \mathbb{K}_a \Omega   \right ) =\left   ( \mathbb{B}_a \Omega  \right )\times \Omega ,
%\end{equation}
%where $ \mathbb{B}_a:= \mathbb{K}_a+ \mathbb{L}_a$.

It is clear from \eqref{eq:motionB}  that the energy 
\begin{equation}
E(\Omega):=\frac{1}{2}\langle  \mathbb{K}_a \Omega , \Omega\rangle,
\end{equation}
 is a first integral. We observe that $  \mathbb{K}_a$ is 
 symmetric and positive definite and, therefore, the level sets
 of $E$ for the reduced system are ellipsoids that we denote by 
 $$\mathcal{E}_\eta:=\left \{\Omega\in \R^3 \, : \, E(\Omega)=\eta>0 \right \}.$$
 
 Since the equations \eqref{eq:motionB} are homogeneous quadratic in $\Omega$, it is easy to show that
if $t\mapsto \Omega(t)$ is a solution of  \eqref{eq:motionB}, then so is $t\mapsto c\Omega(ct)$ for any $c\in \R$. In particular, 
since $E$ is also homogeneous quadratic in $\Omega$, it follows that 
 the dynamics on the different constant energy ellipsoids $\mathcal{E}_\eta$,
$\eta\neq 0$, 
is conjugated by a constant time rescaling. %Hence, the analysis of the dynamics may be restricted to a single ellipsoid $\mathcal{E}_1$.

\subsection{Equilibria}

The nonzero equilibrium points of \eqref{eq:motionB} correspond to steady rotations of the carrier with constant angular velocity. We now classify these solutions and determine their stability on 
each constant energy ellipsoid  $\mathcal{E}_\eta$.

It is clear from \eqref{eq:motionB} that these equilibrium points are precisely the eigenvectors of  $ \mathbb{B}_a$. By our assumption 
\eqref{eq:mom-inertia}, the matrix
 $ \mathbb{B}_a$ has distinct (real) eigenvalues   $\lambda_3<\lambda_2<\lambda_1$, so we conclude that 
 the set of equilibria of \eqref{eq:motionB} is composed by three 
 lines $V_i$, $i=1,2,3$, passing through the origin
\begin{equation*}
\mbox{set of equilibria}= \bigcup_{i=1}^3V_i, \qquad V_i:=\ker (\mathbb{B}_a -\lambda_i I).
\end{equation*}
Each constant energy ellipsoid  $\mathcal{E}_\eta$, with $\eta>0$, intersects the line $V_i$ at two opposite points $\pm v_i$, so there is a total of 6 equilibrium 
points on $\mathcal{E}_\eta$ organized in 3  pairs $\pm v_i$, $i=1,2, 3$ (see Figure \ref{F:eqptsE1}). When analyzing their stability ahead it is useful to keep in mind that the  
dynamics near $v_i$ coincides with that near $-v_i$  if the sense of time is reversed.
This is a consequence of the reversibility of the dynamics of  \eqref{eq:motionB}, i.e. 
if $t\mapsto \Omega(t)$ is a solution so is $t\mapsto -\Omega(-t)$.

\begin{figure}[h!]
\centering
\includegraphics[width=4cm]{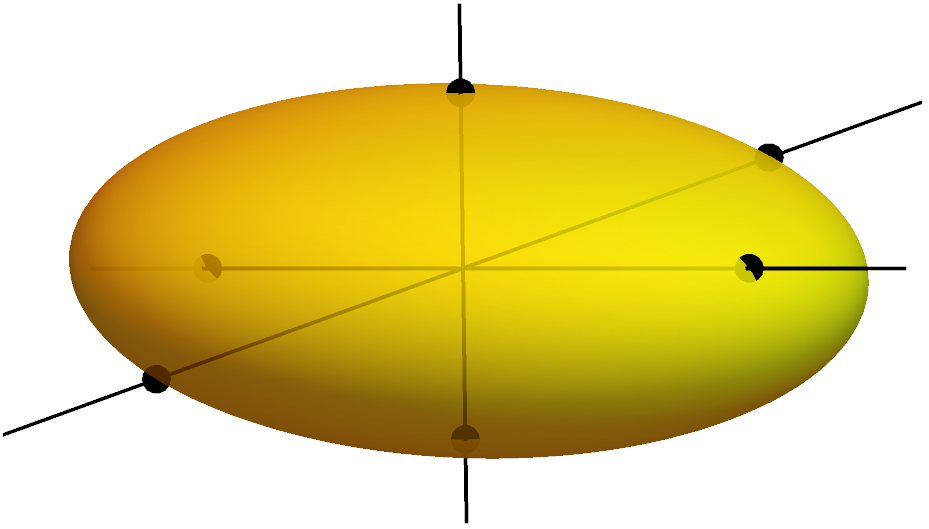} 
 \put (-100,55) {\small{$\mathcal{E}_\eta$}}
 \put (-5,33) {\small{$V_1$}}
  \put (-19,26) {\small{$v_1$}}
    \put (-55,50) {\small{$v_3$}}
      \put (-55,60) {\small{$V_3$}}
       \put (-60,6) {\small{$-v_3$}}
          \put (-91,17) {\small{$v_2$}}
               \put (-106,7) {\small{$V_2$}}
                 \put (-103,30) {\small{$-v_1$}}
                  \put (-28,40) {\small{$-v_2$}}
\caption{{\small The lines $V_i$ and the 6 equilibrium points $\pm v_i$ on a constant energy ellipsoid $\mathcal{E}_\eta$.
}}
\label{F:eqptsE1}
\end{figure}

\subsubsection{Stability analysis}
\label{ss:stability}
The linearization of  \eqref{eq:motionB}  at the equilibrium point $0\neq v_i\in V_i$, i.e. corresponding to the eigenvalue $\lambda_i$, is given by
\begin{equation}
\label{eq:var}
\mathbb{K}_a \dot \delta = G_{v_i} \delta,
\end{equation}
 where $G_{v_i}$ is the endomorphism of $\R^3$ defined by  
\begin{equation*}
 G_{v_i} \delta = [ (\mathbb{B}_a -\lambda_i I) \delta ]\times v_i.
\end{equation*}
Since $G_{v_i}v_i=0$, the linear system \eqref{eq:var} has the line of equilibria $V_i=\langle v_i\rangle$. A more geometric 
insight of the linearized system may be obtained by recalling that the constant energy ellipsoids  are invariant under the nonlinear flow 
 \eqref{eq:motionB}. As a consequence, the tangent plane $T_{v_i}\mathcal{E}_\eta$ to  the ellipsoid  $\mathcal{E}_\eta$, $\eta=E(v_i)$,  is invariant by the linearized flow \eqref{eq:var}.
In other words, we can split $\R^3$ in the form
\begin{equation*}
\R^3=V_i\oplus W_i \quad \mbox{with} \quad W_i:=T_{v_i}\mathcal{E}_\eta=(\nabla_\Omega E(v_i))^\perp=(\mathbb{K}_aV_i)^\perp,
\end{equation*}
and
\begin{equation*}
\mathbb{K}_a^{-1}G_{v_i}(W_i)\subseteq W_i, \qquad \mathbb{K}_a^{-1}G_{v_i}=0 \; \mbox{on $V_i$.}
\end{equation*}
As a consequence, the characteristic polynomial  $ \tilde p(z):=\det(zI - \mathbb{K}_a^{-1}G_{v_i})$ can be factorized as 
\begin{equation*}
 \tilde p(z)=z(z^2+ \tilde \alpha z+ \tilde \beta),
\end{equation*}
where the quadratic factor corresponds to the characteristic polynomial of the restricted endomorphism $\left . \mathbb{K}_a^{-1}G_{v_i} \right |_{W_i} :W_i\to W_i$.
The classical classification for linear equilibria in $\R^2$ in terms of the signs of the trace $- \tilde\alpha$ and the determinant $\tilde \beta$ may then be applied
to determine the nature of the equilibrium point of the restriction of the flow to $\mathcal{E}_\eta$. These observations
lead to the following.

\begin{lemma}
\label{l:AJ}
Let $0\neq v_i\in V_i$ be an equilibrium point of  \eqref{eq:motionB} with $\eta= E(v_i)>0$.  
 Define $ p(z):=\det(z\mathbb{K}_a-G_{v_i})$. Then $p(z)=\det( \mathbb{K}_a)z^3 + \alpha z^2+ \beta z$ with $\alpha, \beta \in \R$   and:
\begin{enumerate}
\item if $\beta<0$ then $\pm v_i$ are saddle points of the restricted flow to $\mathcal{E}_\eta$;
\item if $\beta>0$  then the equilibrium points $\pm v_i$ of the restricted flow to $\mathcal{E}_\eta$ satisfy:
\begin{enumerate}
\item they are both   linear centers if $\alpha=0$;
\item $v_i$ is a source and $-v_i$ a sink if $\alpha<0$;
\item $v_i$ is a sink and $-v_i$ a source if $\alpha>0$.
\end{enumerate}
\end{enumerate}
\end{lemma}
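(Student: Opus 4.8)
The plan is to read off every conclusion from the factorized characteristic polynomial, using the splitting $\R^3=V_i\oplus W_i$ and the invariance of $\mathcal E_\eta$ that were set up just before the statement.

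First I would establish the claimed form of $p$. Since $\mathbb{K}_a$ and $G_{v_i}$ are real $3\times3$ matrices, $p(z)=\det(z\mathbb{K}_a-G_{v_i})$ is a cubic polynomial in $z$ with real coefficients, and the coefficient of $z^3$ is $\det(\mathbb{K}_a)$. Moreover $p(0)=\det(-G_{v_i})=-\det(G_{v_i})=0$, because $G_{v_i}v_i=0$ with $v_i\neq0$ forces $G_{v_i}$ to be singular. Hence $p(z)=\det(\mathbb{K}_a)\,z^3+\alpha z^2+\beta z$ for some $\alpha,\beta\in\R$, as asserted.

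Next I would connect $p$ with the endomorphism $\mathbb{K}_a^{-1}G_{v_i}$ whose restriction to $W_i$ governs the linearized flow on $\mathcal E_\eta$. Factoring out $\mathbb{K}_a$ gives $p(z)=\det(\mathbb{K}_a)\,\tilde p(z)$ with $\tilde p(z)=\det(zI-\mathbb{K}_a^{-1}G_{v_i})=z\,(z^2+\tilde\alpha z+\tilde\beta)$, so $\tilde\alpha=\alpha/\det(\mathbb{K}_a)$ and $\tilde\beta=\beta/\det(\mathbb{K}_a)$. Since $\mathbb{K}_a$ is positive definite, $\det(\mathbb{K}_a)>0$, so $\tilde\alpha$ has the sign of $\alpha$ and $\tilde\beta$ the sign of $\beta$. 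By the discussion preceding the lemma, $z^2+\tilde\alpha z+\tilde\beta$ is exactly the characteristic polynomial of the $2\times2$ system obtained by restricting \eqref{eq:var} to the invariant plane $W_i=T_{v_i}\mathcal E_\eta$, i.e. the linearization at $v_i$ of the flow of \eqref{eq:motionB} restricted to $\mathcal E_\eta$; this planar linear system has trace $-\tilde\alpha$ and determinant $\tilde\beta$. The classical planar classification then gives: if $\beta<0$ then $\tilde\beta<0$, so the eigenvalues are real of opposite sign and $v_i$ is a saddle; if $\beta>0$ then $\tilde\beta>0$, and $\alpha=0$ gives zero trace, hence a linear center, $\alpha<0$ gives positive trace and positive determinant, hence a source, and $\alpha>0$ gives negative trace and positive determinant, hence a sink.

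Finally I would transfer the conclusions to $-v_i$. One clean route: $-v_i\in V_i$ as well, and $G_{-v_i}\delta=[(\mathbb{B}_a-\lambda_i I)\delta]\times(-v_i)=-G_{v_i}\delta$, so the polynomial attached to $-v_i$ is $\det(z\mathbb{K}_a+G_{v_i})=-p(-z)=\det(\mathbb{K}_a)\,z^3-\alpha z^2+\beta z$; thus the classification above applies at $-v_i$ with $\alpha$ replaced by $-\alpha$ and $\beta$ unchanged. This yields: if $\beta<0$, $-v_i$ is again a saddle, which is item 1; if $\beta>0$, then $\alpha=0$ gives a center at $-v_i$ (item 2(a)), $\alpha<0$ gives a sink at $-v_i$, so the pair is $v_i$ source / $-v_i$ sink (item 2(b)), and $\alpha>0$ gives a source at $-v_i$ (item 2(c)). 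Alternatively one simply invokes the reversibility $t\mapsto-\Omega(-t)$ noted earlier, under which the local picture at $-v_i$ is that at $v_i$ run backwards in time, turning a source into a sink and vice versa while leaving saddles and centers unchanged. There is no serious obstacle here: essentially all the geometric content is already available, and the only thing demanding care is the sign bookkeeping — that $\det(\mathbb{K}_a)>0$ makes $\alpha,\beta$ and $\tilde\alpha,\tilde\beta$ share signs, and that $\alpha$ (but not $\beta$) changes sign when passing from $v_i$ to $-v_i$ — together with, if one prefers the computational route, the identity $G_{-v_i}=-G_{v_i}$ and the resulting $p_{-v_i}(z)=-p(-z)$.
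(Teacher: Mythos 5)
Your proof is correct and follows essentially the same route as the paper: factor $p(z)=\det(\mathbb{K}_a)\tilde p(z)$, use $\det(\mathbb{K}_a)>0$ to identify the signs of $-\alpha$ and $\beta$ with the trace and determinant of the restriction to $W_i$, and apply the planar classification, with the behaviour at $-v_i$ obtained from reversibility. Your alternative computational route for $-v_i$ (via $G_{-v_i}=-G_{v_i}$, hence $p_{-v_i}(z)=-p(-z)$) is a correct, slightly more explicit substitute for the reversibility argument the paper invokes, but does not change the substance.
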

\begin{proof}
We have $p(z)=\det(\mathbb{K}_a) \tilde p(z)$ and hence $p(z)$ may be written as indicated with $\alpha= \det(\mathbb{K}_a) \tilde \alpha$ and $\beta= \det(\mathbb{K}_a) \tilde \beta$.
Considering that $\det(\mathbb{K}_a)>0$, we conclude  that the signs of $-\alpha$ and
$\beta$ respectively coincide with the signs of the trace and determinant of $\left . \mathbb{K}_a^{-1}G_{v_i} \right |_{W_i} :W_i\to W_i$. 
The specific nature of the equilibrium point $v_i$ described in items $(i), (ii)$, 
follows, as was mentioned in the text above, 
 from the standard classification of equilibria of linear systems in $\R^2$ in terms of the signs of the trace and the determinant of the linearization matrix.
The conclusions about the behavior at $-v_i$ follow from the reversibility of the dynamics. 
\end{proof}

Below we use this lemma to determine the linear stability of the equilibrium points of the restriction of the flow to 
a constant energy ellipsoid $\mathcal{E}_\eta$. For this we note that the  matrix form of $G_{v_i}$ is 
\begin{equation*}
%\begin{split}
%&
 G_{v_i}=\left ( g_1 \right | \left . g_2 \right | \left . g_3 \right ), \qquad
%&
g_j=(b_j-\lambda_i e_j)\times v_i, \;\;\; j=1,2,3,
%\end{split}
\end{equation*}
where $b_j$, $j=1,2,3$ are the columns of $\mathbb{B}_a$. Using the  explicit expressions
for the matrices $G_{v_i}$ and $\mathbb{K}_a$, the coefficients $\alpha$ and $\beta$ in the lemma can be
 conveniently calculated with the help of  a symbolic algebra
software and their sign can be determined considering \eqref{eq:mom-inertia-lambda}. Finally, note that the stability properties determined at a  specific equilibrium point $v_i\in V_i$ may be extended to the whole line  $V_i$ (excluding the origin) 
since,  as mentioned above, the dynamics on the different ellipsoids $\mathcal{E}_\eta$ is conjugated by a constant time rescaling.

\paragraph{Linearization at $\pm v_1\in V_1$.} Taking  $v_1$ as the column vector $(\lambda_3-\lambda_1, 0, a_1K_3)$ one finds
\begin{equation*}
\begin{split}
\beta&=(\lambda_1- \lambda_2) (\lambda_1-\lambda_3)
   \left(\left(a_1^2K_3+ \lambda_1\right) ( \lambda_1 -\lambda_3)^2+a_1^2K_3^2 \lambda_3\right)>0, \\
   \alpha&= -a_2K_3 \lambda_3\left (a_1^2K_3(\lambda_1-\lambda_2)+\lambda_1(\lambda_1-\lambda_3) \right ).
   \end{split}
\end{equation*}
Therefore, using Lemma \ref{l:AJ}, we conclude that $\pm v_1$ are a source and a sink on the ellipsoid $\mathcal{E}_\eta$
if $a_2\neq 0$ and instead two linear centers if $a_2=0$.

\paragraph{Linearization at $\pm v_2\in V_2$.} For the column vector  $v_2=(0,\lambda_3-\lambda_2, a_2K_3)$ one computes
\begin{equation*}
\beta=-(\lambda_1- \lambda_2) (\lambda_2-\lambda_3)
   \left(\lambda_2 ( \lambda_2 -\lambda_3)^2+a_2^2K_3 \left  ( (\lambda_2-\lambda_3)^2 +K_3 \lambda_3 \right) \right )<0.
 \end{equation*}
Hence, Lemma \ref{l:AJ} implies that $\pm v_2$ are saddle points on $\mathcal{E}_\eta$
for all values of $a_1, a_2\in \R$.

\paragraph{Linearization at $\pm v_3\in V_3$.} Finally, for the column vector $v_3=(0,0,1)$ we have,
\begin{equation*}
\beta=(\lambda_1- \lambda_3) (\lambda_2-\lambda_3)\lambda_3>0, \qquad
   \alpha= -a_1a_2 K_3(\lambda_1- \lambda_2)\lambda_3.
 \end{equation*}
 By  Lemma \ref{l:AJ}, we conclude that $\pm v_3$ are a source and a sink on the ellipsoid $\mathcal{E}_\eta$
if $a_1\neq 0$ and $a_2\neq 0$, and instead two linear centers if either $a_1$ or $a_2$ vanish.

\begin{remark}
\label{rmk:centers}
 It can be shown that the linear centers $\pm v_1$ on the plane $W_1$ occurring when $a_2=0$, and $\pm v_3$ on the plane $W_3$ occurring when $a_1a_2=0$, are actually   {\em nonlinear centers} on the corresponding constant energy ellipsoids. For instance, 
let us consider  the equilibria $\pm v_3$ in the case $a_2=0$.  On a neighborhood of  $\pm v_3$ on the energy ellipsoid $\mathcal{E}_\eta$ ($\eta=E(v_3)$) we may write 
$$\Omega_3 =\pm  \frac{1}{\sqrt{\lambda_3}}\sqrt{ 2\eta -\lambda_2\Omega_2^2 -(\lambda_1+a_1^2K_3)\Omega_1^2},$$
with the  equilibrium point $\pm v_3$ corresponding to $(\Omega_1,\Omega_2)=(0,0)$.  Then the
 system \eqref{eq:motionB} restricted to $\mathcal{E}_\eta$ is locally equivalent to a planar system of the type
\begin{equation}
\label{eq:planarsystem}
\dot{\Omega}_1 =P( \Omega_1  , \Omega_2 ), \qquad \dot{\Omega}_2 =Q( \Omega_1  , \Omega_2 ),
\end{equation} 
having an isolated equilibrium point at the origin which, by our previous analysis, is a linear center. As may be verified,
this planar system possesses the symmetry 
$$P( \Omega_1  ,- \Omega_2 )= -P( \Omega_1  , \Omega_2 ) , \qquad Q( \Omega_1  , -\Omega_2 )=Q( \Omega_1  , \Omega_2 ).$$ 
Therefore,  a theorem by Poincar\'e (see e.g. \cite[Theorem 4.6571, p.122]{NemStepanov}) implies that $(0,0)$ is a nonlinear center. 
The above planar symmetry is inherited from the {\em reversibility} of the full 3D system 
with respect to the involution  
\begin{equation*}
\Sigma^{(2)}:\R^3\to \R^3, \qquad \Sigma^{(2)}(\Omega_1,\Omega_2,\Omega_3)=(\Omega_1,-\Omega_2,\Omega_3),
\end{equation*}
 when $a_2=0$ (i.e. if $t\mapsto \Omega(t)$ is a solution of \eqref{eq:motionB}, so is $t\mapsto \Sigma^{(2)}\Omega(-t)$).
A similar  approach may be used to analyze the equilibria  $\pm v_3$ when $a_2 \neq 0$, 
$a_1=0$. This time the conclusion follows since the resulting planar system \eqref{eq:planarsystem} possesses
the symmetry
$$P(- \Omega_1  , \Omega_2 )= P( \Omega_1  , \Omega_2 ) , \qquad Q( -\Omega_1  , \Omega_2 )=-Q( \Omega_1  , \Omega_2 ),$$ 
which is inherited from the reversibility of   \eqref{eq:motionB} 
with respect to 
\begin{equation*}
\Sigma^{(1)}:\R^3\to \R^3, \qquad \Sigma^{(1)}(\Omega_1,\Omega_2,\Omega_3)=(-\Omega_1,\Omega_2,\Omega_3).
\end{equation*}
The proof that $\pm v_1$ are nonlinear centers on the constant energy ellipsoids when $a_2=0$ is analogous and ultimately follows
from the reversibility of  \eqref{eq:motionB}  with respect to $\Sigma^{(2)}$. 
\end{remark} 

\subsection{Existence of an invariant measure}
\label{ss:measureExample}

We are now ready to discuss existence of an invariant measure which is the  point that we wish to illustrate with our example.

We start by  rewriting the reduced equations of motion \eqref{eq:motionB} as $\dot \Omega = X(\Omega)$ where the vector field $X:\R^3\to \R^3$ is given by
\begin{equation*}
\label{eq:motionX}
X(\Omega)= \mathbb{K}_a^{-1} \left  (  ( \mathbb{B}_a\Omega)  \times \Omega \right ). 
\end{equation*}
 Given that $X$ is homogeneous quadratic in  $\Omega$, it follows from Proposition 1 in Kozlov's paper \cite{Kozlov88} 
that a volume form $f(\Omega) \, d\Omega_1d\Omega_2d\Omega_3$ with  {\em strictly positive} and $C^1$ density $f:\R^3\to \R$ is invariant if and only if
 the standard euclidean 
volume form $d\Omega_1d\Omega_2d\Omega_3$ is also invariant and $f$ is a first integral. Therefore, necessary and sufficient
 conditions of existence of an  invariant 
measure  with a strictly positive $C^1$ density may   be obtained by requiring that  $\mbox{div} (X)$  vanishes for all values of $\Omega\in \R^3$.
A calculation, which is conveniently performed with the help of a symbolic algebra software, yields
\begin{equation*}
\mbox{div} (X)(\Omega)=\frac{\lambda_3K_3}{\det(\mathbb{K}_a)}\left ( -a_2\lambda_1 \Omega_1 +a_1\lambda_2\Omega_2 +a_1a_2(\lambda_1-\lambda_2)\Omega_3 \right ).
\end{equation*}
Considering that  $\lambda_j>0$, $j=1,2,3$, the above expression vanishes for all  $\Omega\in \R^3$ if and only if $a_1=a_2=0$.
Therefore, we have the following.

\begin{proposition}
\label{prop:kozlov}
The reduced equations of motion \eqref{eq:motionB}  possess an invariant volume form \newline $f(\Omega) \, d\Omega_1d\Omega_2d\Omega_3$ with a strictly positive 
density function $f:\R^3\to \R$ of class $C^1$ if and only if the vector of forbidden rotations is parallel to the axis $E_3$ of the rotor, i.e. if $a_1=a_2=0$.
\end{proposition}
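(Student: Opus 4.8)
The plan is to lean directly on the reduction already set up in the paragraph preceding the statement. The reduced vector field $X(\Omega)=\mathbb{K}_a^{-1}\bigl((\mathbb{B}_a\Omega)\times\Omega\bigr)$ is homogeneous quadratic, so Kozlov's Proposition~1 in \cite{Kozlov88} applies: a volume form $f(\Omega)\,d\Omega_1 d\Omega_2 d\Omega_3$ with $f$ strictly positive and of class $C^1$ is invariant under \eqref{eq:motionB} if and only if both $\mathrm{div}(X)\equiv 0$ on $\R^3$ and $f$ is a first integral. Since the statement only asserts the \emph{existence} of such a form, for the forward implication I extract just the divergence condition; the first-integral condition is not needed here. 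Thus the whole proposition reduces to the elementary fact that $\mathrm{div}(X)$ vanishes identically on $\R^3$ if and only if $a_1=a_2=0$.

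To see this I would compute $\mathrm{div}(X)$ explicitly from the matrix expressions for $\mathbb{K}_a$ and $\mathbb{B}_a$. A symbolic computation gives
\begin{equation*}
\mathrm{div}(X)(\Omega)=\frac{\lambda_3 K_3}{\det(\mathbb{K}_a)}\bigl(-a_2\lambda_1\Omega_1+a_1\lambda_2\Omega_2+a_1 a_2(\lambda_1-\lambda_2)\Omega_3\bigr).
\end{equation*}
This is a linear function of $\Omega$, so it vanishes identically precisely when each of its three coefficients vanishes. The scalar prefactor is nonzero, since $K_3>0$ and $\mathbb{K}_a$ is symmetric positive definite, hence $\det(\mathbb{K}_a)>0$; and $\lambda_1,\lambda_2>0$ by \eqref{eq:mom-inertia-lambda}. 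Therefore the coefficient of $\Omega_1$ forces $a_2=0$ and the coefficient of $\Omega_2$ forces $a_1=0$, after which the coefficient of $\Omega_3$ is automatically zero. Conversely, when $a_1=a_2=0$ all three coefficients vanish, $\mathrm{div}(X)\equiv 0$, and then the Lebesgue measure itself (the case $f\equiv 1$) is an invariant volume form with strictly positive $C^1$ density. This establishes both implications.

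The only laborious ingredient is the symbolic evaluation of $\mathrm{div}(X)$; once that formula is in hand the argument is a one-line linear-algebra observation, so I do not anticipate any real obstacle. If one wished to avoid appealing to \cite{Kozlov88}, one could instead reprove the needed special case directly by exploiting the scaling symmetry $\Omega(t)\mapsto c\,\Omega(ct)$ of \eqref{eq:motionB}, but invoking the cited result is cleaner and shorter.
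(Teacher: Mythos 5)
Your proposal is correct and follows essentially the same route as the paper: both invoke Kozlov's Proposition~1 for homogeneous quadratic fields to reduce the question to the identical vanishing of $\mathrm{div}(X)$, compute the same explicit formula, and read off $a_1=a_2=0$ from the coefficients. Your slightly more explicit handling of the two implications (using only the divergence condition for necessity and exhibiting $f\equiv 1$ for sufficiency) is a harmless refinement of the paper's argument.
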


\begin{remark}
The above proposition may also be proved applying the criterium given in Theorem 2 in \cite{Kozlov88} (or any of its reformulations or generalizations e.g. \cite{Jov98,ZeBloch03,FedGNMa2015}). 
\end{remark}

\begin{remark}
\label{rmk:Euler}
If  $a_1=a_2=0$,  then $\mathbb{K}_a=\mathbb{B}_a$ so the reduced equations \eqref{eq:motionB} become
 $\mathbb{K}_a\dot \Omega =(  \mathbb{K}_a  \Omega )\times \Omega$ which
are Euler's equations for a free rigid body with inertia tensor $\mathbb{K}_a$. As is well-known, the latter equations are  Hamiltonian with respect to the Lie-Poisson structure on  the coalgebra $\so(3)^*\simeq \R^3$.
Therefore, in the terminology of nonholonomic systems, we say that the system is {\em Hamiltonizable} for $a_1=a_2=0$.
\end{remark}

Our example  illustrates  that a more delicate analysis of the dynamics, aimed at understanding
 obstructions for the existence of attractors in the phase space, should necessarily 
consider the invariance of a wider class of measures. Specifically, the following generalization of Proposition \ref{prop:kozlov} holds
(see section \ref{S:measures} for the definition  of measures of class $\A$).

\begin{theorem}
\label{th:main-example}
The reduced equations of motion \eqref{eq:motionB}  possess an invariant  measure of class $\A$ on $\R^3$
 if and only if $a_2=0$. Namely, if and only if the vector of forbidden rotations lies on the plane generated
 by the largest and smallest axes of inertia of the carrier.
\end{theorem}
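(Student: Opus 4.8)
The plan is to prove the two implications separately. For the "only if" direction, I would argue by contradiction: assume $a_2 \neq 0$ and show that the reduced flow \eqref{eq:motionB} has an attractor on some constant energy ellipsoid $\mathcal{E}_\eta$, which by Proposition \ref{P:attractor-measure} (in its manifold form, applied with $\mathcal{M}=\mathcal{E}_\eta$, or alternatively on all of $\R^3$ after noting that the energy foliation lets one localize the argument) rules out any class-$\A$ invariant measure. The stability analysis already carried out gives exactly what is needed: when $a_2\neq 0$, the equilibria $\pm v_1$ on $\mathcal{E}_\eta$ are a source and a sink (the case $a_1=0$, $a_2\neq0$ is covered because $\beta>0$ and $\alpha\neq0$ there too, since $\alpha=-a_2K_3\lambda_3\lambda_1(\lambda_1-\lambda_3)\neq0$), while $\pm v_2$ are always saddles. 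So a sink always exists on $\mathcal{E}_\eta$, and a hyperbolic sink of a $C^1$ flow is an attractor in the sense defined in Section \ref{S:measures}. One subtlety: Proposition \ref{P:attractor-measure} as stated concerns the flow on the whole space; since $\mathcal{E}_\eta$ is an invariant submanifold I would invoke the manifold version together with the observation (from subsection on conservation of energy) that $\R^3\setminus\{0\}$ is foliated by the $\mathcal{E}_\eta$ — a class-$\A$ measure on $\R^3$ would restrict, via disintegration along the energy, to a class-$\A$ measure on almost every $\mathcal{E}_\eta$, giving the contradiction. Alternatively, and more cleanly, one notes that the sink on $\mathcal{E}_\eta$, together with the quadratic homogeneity, produces an invariant cone in $\R^3$ attracting an open set, so Remark \ref{rmk:measurezeroW} applies directly on $\R^3$.

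For the "if" direction, assume $a_2=0$ and exhibit an explicit class-$\A$ invariant measure. Here I would look for a density of the form $M(\Omega) = |\ell(\Omega)|^{p}\,|E(\Omega)|^{q}$ or, more promisingly, a product of powers of linear/quadratic first integrals and of a linear functional that is "eigen" for the linearized dynamics — mimicking the $2$D example \eqref{eq:2Dexample} with $M=|x_1|^5 x_2^2$ given in Section \ref{S:measures}. Since $\mbox{div}(X)(\Omega) = \frac{\lambda_3 K_3}{\det \mathbb{K}_a}\bigl(-a_2\lambda_1\Omega_1 + a_1\lambda_2\Omega_2 + a_1a_2(\lambda_1-\lambda_2)\Omega_3\bigr)$, when $a_2=0$ this collapses to $\mbox{div}(X)(\Omega) = \frac{\lambda_3 K_3 a_1 \lambda_2}{\det\mathbb{K}_a}\,\Omega_2$. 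The task is then to find $M>0$ a.e., locally integrable, with $\mathrm{div}(MX)=0$, i.e. $X(\ln M) = -\mathrm{div}(X)$ wherever $M>0$; equivalently $\langle \nabla M, X\rangle + M\,\mathrm{div}(X)=0$ as a distributional identity across the zero set of $M$. The natural guess is $M = |\Omega_2|^{c}\,\Phi(\Omega)$ where $\Phi$ is an everywhere-positive first integral built from $E$ and any other conserved quantity, and $c>0$ is chosen so the $\Omega_2$-power absorbs the divergence. Because $\Omega_2$ is \emph{not} itself a first integral, one must check that $X(\ln|\Omega_2|) = \dot\Omega_2/\Omega_2$ combines with $\mathrm{div}(X)$ to something that is itself (minus) the logarithmic derivative of a first integral along the flow; this is where the reversibility symmetry $\Sigma^{(2)}$ of Remark \ref{rmk:centers} should enter — it forces the relevant obstruction to lie in the $\Sigma^{(2)}$-odd part, which is spanned by $\Omega_2$ up to first integrals.

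Concretely, the cleanest route I anticipate: when $a_2=0$ the matrices simplify and \eqref{eq:motionB} may have a \emph{second} polynomial first integral $F$ beyond $E$ (the system becomes a perturbation of the $a_1=a_2=0$ Euler case of Remark \ref{rmk:Euler}, which has the Casimir $|\Omega|^2_{\mathbb{K}_a}$; one should search for the analogue here). If a second independent first integral $F$ exists, then on each level curve $\{E=\eta\}\cap\{F=c\}$ the flow is one-dimensional, and standard theory produces an invariant density as the reciprocal of the speed; patching these over the two-parameter family, and checking positivity a.e. and local integrability near the common zero locus, yields the measure. I expect the main obstacle to be precisely this: verifying that the candidate density is \emph{locally integrable} (the powers must be chosen $>-1$ near the vanishing hyperplane $\{\Omega_2=0\}$ and the first-integral factors must not blow up), and confirming that no lower-dimensional "extra" first integral is needed — i.e. that the explicit $M$ one writes down genuinely satisfies \eqref{eq:pde} in the distributional sense across $\{\Omega_2 = 0\}$, not merely on its complement. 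I would organize the final proof as: (1) recall the divergence formula and reduce "if" to solving $\mathrm{div}(MX)=0$; (2) produce the explicit $M$ (guided by $\Sigma^{(2)}$-reversibility and the structure at $a_2=0$) and verify \eqref{eq:pde} off the zero set; (3) check the distributional identity holds across $\{\Omega_2=0\}$ and that $M\in L^1_{\mathrm{loc}}$, strictly positive a.e.; (4) for "only if", invoke the sink produced by the stability analysis and Remark \ref{rmk:measurezeroW} / the manifold form of Proposition \ref{P:attractor-measure}.
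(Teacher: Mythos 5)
Your ``only if'' argument is essentially the paper's: when $a_2\neq 0$ the stability analysis yields a sink at one of $\pm v_1$ on each $\mathcal{E}_\eta$, and the paper passes to $\R^3$ exactly as in your second alternative, by taking a compact segment $K$ of the line $V_1$ (a measure-zero invariant set attracting an open set, so Remark \ref{rmk:measurezeroW} applies). That half is sound. The difficulty is entirely in the ``if'' direction, where your proposal remains a plan rather than a proof, and the specific ansatz you put forward would fail.

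Concretely: you propose $M=|\Omega_2|^{c}\,\Phi(\Omega)$ with $\Phi$ a positive first integral, hoping the power of $\Omega_2$ absorbs $\operatorname{div}(X)=\tfrac{\lambda_3K_3a_1\lambda_2}{\det\mathbb{K}_a}\,\Omega_2$. This cannot work, because the plane $\{\Omega_2=0\}$ is \emph{not} invariant under the flow when $a_2=0$: from \eqref{eq:motionB} one gets $\lambda_2\dot\Omega_2=-a_1K_3\Omega_1^2+(\lambda_3-\lambda_1)\Omega_1\Omega_3$, which does not vanish on $\{\Omega_2=0\}$. Hence $X(\ln|\Omega_2|)=\dot\Omega_2/\Omega_2$ has a genuine pole along that plane which no first-integral factor $\Phi$ can cancel, so $\operatorname{div}(MX)=0$ fails (even distributionally) across $\{\Omega_2=0\}$. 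The mechanism that actually works requires linear forms whose zero sets are invariant planes, so that their logarithmic derivatives along $X$ are polynomial. The paper identifies these as $\ell_\pm(\Omega)=\Omega_1-\xi_\pm\Omega_3$ (the planes $\pi_\pm$ through the saddle line $V_2$, containing its stable/unstable directions) and takes $M=(\Omega_1-\xi_+\Omega_3)^{n-1}|\Omega_1-\xi_-\Omega_3|^{n\gamma-1}$ with explicit $\xi_\pm,\gamma$ and $n$ odd large enough that $M\in C^1$; a direct computation then gives $\operatorname{div}(MX)\equiv 0$, and $M$ vanishes only on the two planes, which have measure zero. Your fallback route via a second first integral is closer in spirit --- indeed $F=(\Omega_1-\xi_+\Omega_3)|\Omega_1-\xi_-\Omega_3|^{\gamma}$ is conserved --- but note that $F$ is not polynomial (the exponent $\gamma$ is generically irrational), and you never construct it nor carry out the speed-reciprocal/patching argument, so the existence half of the theorem is not established by your proposal as written.
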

\begin{proof} If $a_2\neq 0$ then, by the analysis of section \ref{ss:stability}, the restricted dynamics to a constant energy ellipsoid $\mathcal{E}_\eta$, with $\eta>0$,
has a sink at one of  the  equilibrium points  $\pm v_1$.  Hence, the extension of Proposition \ref{P:attractor-measure} to manifolds
rules out the existence of an invariant  measure of class $\A$
for the restricted flow to $\mathcal{E}_\eta$ (see the comments at the end of Section \ref{S:measures}). 
The same conclusion about the unrestricted system \eqref{eq:motionB} on $\R^3$
can be obtained by applying the extension of Proposition \ref{P:attractor-measure} 
indicated in Remark \ref{rmk:measurezeroW}. Indeed,  an appropriate closed 
segment $K$ of the line $V_1$  is a compact invariant set of measure zero  which attracts a non-empty  open subset  $W\subset \R^3$. This shows that $a_2=0$ is a necessary condition for the existence of an invariant measure of class $\A$. To show that
it is sufficient, we give an explicit formula for an invariant measure of class $\A$ when $a_2=0$. Let,
\begin{equation*}
\begin{split}
&R=\left ( a_1^2K_3^2\lambda_3^2 +4(\lambda_1+a_1^2K_3)\lambda_3(\lambda_1-\lambda_2)(\lambda_2-\lambda_3)\right )^{1/2},\\
&\xi_\pm=\frac{a_1K_3\lambda_3\pm R}{2(\lambda_1-\lambda_2)(\lambda_1+a_1^2K_3)}, \qquad 
\gamma = \frac{R-a_1K_3\lambda_3}{R+a_1K_3\lambda_3},
\end{split}
\end{equation*}
and note that  $\gamma>0$ since $|a_1K_3\lambda_3|<R$. Choose $n\in \mathbb{N}$ odd  large enough so that  $M:\R^3\to \R$ defined by
\begin{equation}
\label{eq:den}
M(\Omega):= (\Omega_1-\xi_+ \Omega_3)^{n-1} \left | \Omega_1-\xi_-\Omega_3 \right |^{n\gamma-1}
\end{equation}
is of class $C^1$. A direct calculation, which is conveniently performed with a symbolic algebra software shows that, for $a_2=0$:
\begin{equation*}
\mbox{div} (M X)(\Omega)=0, \quad \forall \Omega\in \R^3.
\end{equation*}
Considering that $M$ is nonnegative and  vanishes only  along the planes
\begin{equation*}
\pi_{\pm}: \Omega_1- \xi_\pm \Omega_3=0,
\end{equation*}
which have measure zero, we conclude that the measure $M(\Omega)\, d\Omega_1d\Omega_2d\Omega_3$ is of class $\A$ and is invariant.
\end{proof}

We now make the following observations for the system in the case $a_2=0$:

\begin{enumerate}
\item[1.] The density $M$ given in  \eqref{eq:den}  only vanishes along the planes $\pi_{\pm}$ which can be shown to be invariant by the dynamics
(this actually follows from  \cite[Theorem 4]{Kozlov16}). The intersection of these planes
with a constant energy ellipsoid is comprised of the saddle points $\pm v_2$ together with four  heteroclinic connections between them.
In fact, we found the explicit form of $M$ by writing the  flow on the ellipsoid near the saddle point $v_2$ in coordinates suggested by the linearization of the stable and unstable manifolds and
proceeding in analogy with example  \eqref{eq:2Dexample}.

\item[2.] The liberty in the choice of $n$ in the  definition \eqref{eq:den} of the density $M$ comes from the fact that $$F(\Omega)=  (\Omega_1-\xi_+ \Omega_3) \left | \Omega_1-\xi_-\Omega_3 \right |^\gamma,$$
is a first integral. Therefore,  $M$ remains the density of an invariant measure when multiplied by  arbitrary  powers of $F$. 

\item[3.] The intersections of the level sets of $F$ with  the constant energy ellipsoids are generically closed curves which are periodic orbits of the dynamics. The phase space on a
fixed energy ellipsoid obtained numerically is illustrated in Figure \ref{F:a2iszero}. From the dynamical point of view, it seems undistinguishable from the phase flow of the Euler equations  
which occurs when $a_1=0$ (see Remark \ref{rmk:Euler}).  It is reasonable to expect that both systems are topologically equivalent.
\end{enumerate}

%
%\begin{figure}[h!]
%\centering
%\includegraphics[width=4cm]{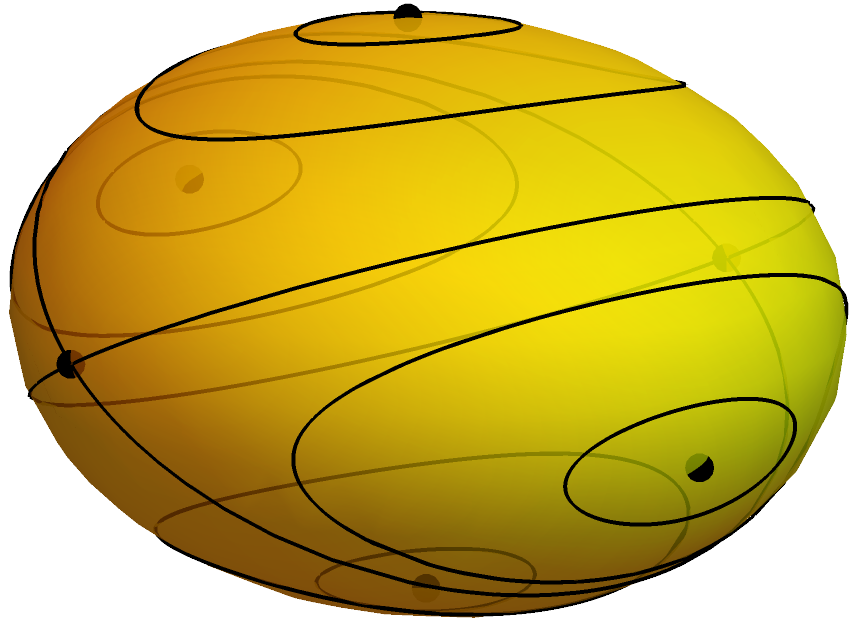} 
%\caption{{\small Phase flow on a constant energy ellipsoid when $a_2=0$.
%}}
%\label{F:a2iszero}
%\end{figure}
%

For completeness, we also provide illustrations of the phase flow on a constant energy ellipsoid obtained numerically when $a_2\neq 0$. The case $a_1\neq 0$ is shown 
in Figure \ref{F:2attractors}, where one can appreciate the  attractive  and repelling properties of the equilibria $\pm v_1$ and  $\pm v_3$ which exclude the existence of an invariant measure of class $\A$.
The case when $a_1=0$, shown in Figure \ref{F:a1iszero}, is more interesting. The equilibria $\pm v_1$ are again a source and a sink, but now $\pm v_3$ become nonlinear centers (see Remark \ref{rmk:centers}).
The numerics suggests the existence  of a homoclinic orbit emanating from $v_2$ and another from $-v_2$ which enclose the period annulus of $\pm v_3$.
It is reasonable to expect
the existence of an invariant measure whose support is not the whole phase space but the invariant
region occupied by the periodic orbits.

\begin{figure}[ht]
 \begin{subfigure}{.3\textwidth}
  \centering
  % include fourth image
  \includegraphics[width=.43\linewidth]{a2iszero.pdf}  
  \caption{$a_2= 0$.}
  \label{F:a2iszero}
\end{subfigure} 
 \begin{subfigure}{.3\textwidth}
  \centering
  % include fourth image
  \includegraphics[width=.46\linewidth]{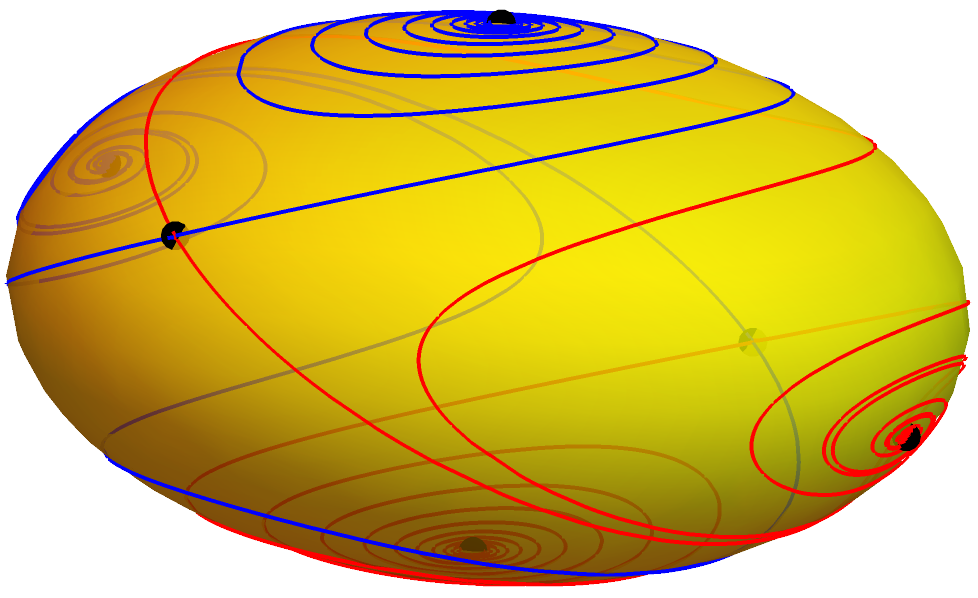}  
  \caption{$a_1\neq 0$, $a_2\neq 0$.}
  \label{F:2attractors}
\end{subfigure} 
\begin{subfigure}{.3\textwidth}
  \centering
  % include fifth image
  \includegraphics[width=.45\linewidth]{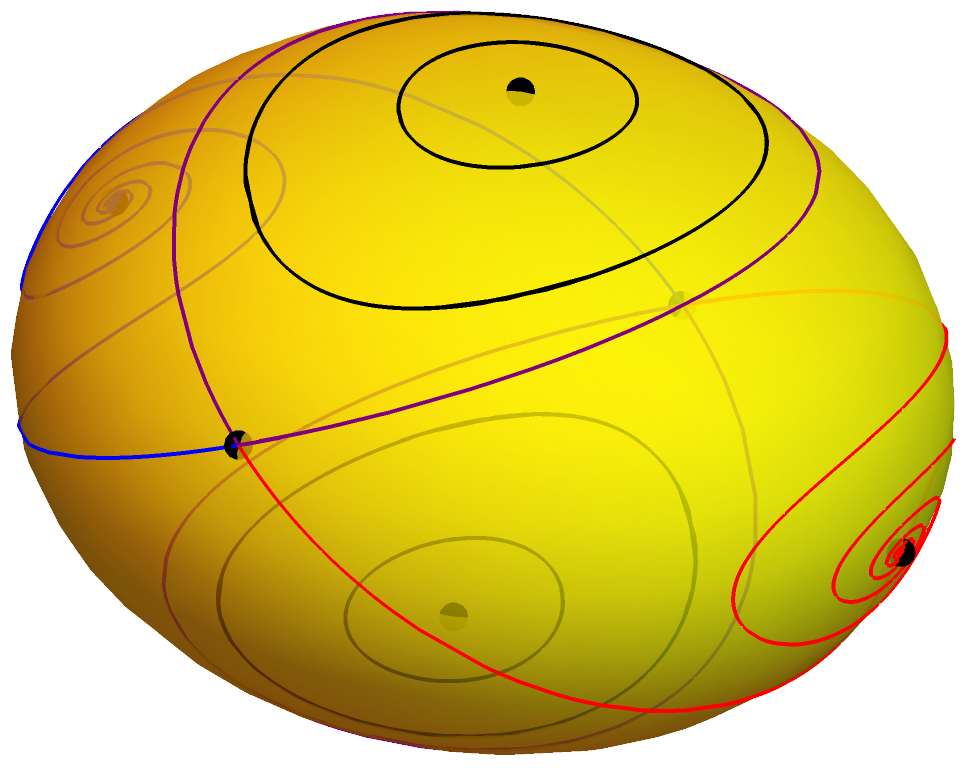}
  \caption{$a_1=0$, $a_2\neq 0$.}
  \label{F:a1iszero}
\end{subfigure} 
\caption{Phase flow on a constant energy ellipsoid for different values of $a_1$ and $a_2$. Attractors
and sources are present unless $a_2=0$.}
\label{fig:a2nonzero}
\end{figure}

\section{Final remarks}
\label{S:conclusions}

We have given a simple example of a nonholonomic system that, for certain values of the 
parameters,  possesses  an invariant measure of class $\A$ which is an obstruction to 
the existence of attractors.   Our point is that the existence of such invariant measure 
cannot be detected with the methods developed in previous references treating the problem of existence of
invariant volumes in nonholonomic mechanics
(e.g. \cite{Kozlov88,Stanchenko89,Jov98,CaCoLeMa02,ZeBloch03,FedGNMa2015}),
since they are limited to the class of measures with  strictly positive   $C^1$  densities.
Our example shows that the results of these references should be extended to a wider class of
measures if one wishes to understand obstructions or mechanisms which lead to the   existence of limit cycles (like  those
exhibited by the dynamics of the rattleback). 

We finally mention that the condition $a_2=0$, which leads to the existence of an invariant
measure of class $\A$ in the example, also leads to the reversibility of the flow with respect to the
 involution $\Sigma^{(2)}$ mentioned in Remark \ref{rmk:centers}. The relevance
of this type of discrete symmetries as obstructions to the existence attractors in the phase space of nonholonomic
systems had been indicated before (e.g. \cite[Theorem 3.3]{ZeBloch03}, \cite[Appendix]{BoMaBi13}), and may be worth
investigating further in connection with the existence of invariant measures in the class $\A$.

\section*{Acknowledgements}
LGN is grateful to the  Department of Applied Mathematics
at the University of Granada for its hospitality during his recent visit.
He also acknowledges 
support from the project MIUR-PRIN 2022FPZEES {\em Stability in Hamiltonian dynamics and beyond}. RO and AJU acknowledge funding provided by the Spanish MICINN project PID2021-128418NA-I00.

\section*{Statements and declarations} 

The authors have no competing interests to declare that are relevant to the content of this article.

\vskip 1cm

%
%
%\noindent LGN: Departamento de Matem\'aticas y Mec\'anica, 
%IIMAS-UNAM.
%Apdo. Postal 20-126, Col. San \'Angel,
%Mexico City, 01000,  Mexico. luis@mym.iimas.unam.mx.  
\end{document}